 \documentclass[final,1p,times]{elsarticle}
\usepackage{amssymb}
\usepackage{amsthm}
\usepackage{hyperref}
\usepackage{amsmath,amssymb,amsopn,amsfonts,mathrsfs,amsbsy,amscd}
\usepackage{longtable}
\usepackage{color}
\journal{Aequationes Mathematicae}

\newcommand{\e}{\epsilon}

\newtheorem{definition}{Definition}[section]
\newtheorem{theorem}{Theorem}[section]
\newtheorem{proposition}{Proposition}[section]
\newtheorem{lemma}{Lemma}[section]
\newtheorem{corollary}{Corollary}[section]

\newtheorem{remark}{Remark}[section]
\numberwithin{equation}{section}

\begin{document}
\begin{frontmatter}
\title{$\frac{1}{2}$ derivations and biderivations of 3-dimensional nilpotent Lie-Yamaguti algebras  }
%% use optional labels to link authors explicitly to addresses:

\author[label1]{Hassan Oubba}
%\author[label2]{Hassan Oubba}
\address[label1]{Universit\'e Hassan II de Casablanca\\
		Facult\'e des sciences A\"in Chock -  D\'epartement de Math\'ematiques et Informatique\\
		B.P. 5366 Maarif, Casablanca, Maroc\\e-mail:
		 HASSAN.OUBBA@univh2c.ma}
	%\address[label2]{Universit\'e Moulay Isma\"{i}l\\
		%Facult\'e des sciences -  D\'epartement de Mth\'ematiques\\
		%B.P. 11201, Zitoune, M\'ekn\`es 50000, Maroc\\e-mail:
		 %hassan.oubba@edu.umi.ac.ma
	%}

\begin{abstract}
In this paper, we investigate $\frac{1}{2}$-derivations and their corresponding local and 2-local 
analogues on three-dimensional nilpotent Lie--Yamaguti algebras over a field $\mathbb{F}$ 
of characteristic zero. Utilizing the established classification of these algebras, we explicitly 
determine their structures of $\frac{1}{2}$-derivations and biderivations. Furthermore, we 
characterize the matrix representations of their local $\frac{1}{2}$-derivations. Our findings 
demonstrate that, for the non-abelian algebras under consideration, the space of local 
$\frac{1}{2}$-derivations is strictly larger than that of ordinary $\frac{1}{2}$-derivations. 
In addition, we construct examples of 2-local $\frac{1}{2}$-derivations that fail to be global 
$\frac{1}{2}$-derivations. Finally, a complete characterization of the biderivations for these 
algebras is provided.

\end{abstract}
\begin{keyword}
Derivations, $\frac{1}{2}$-derivations, Local $\frac{1}{2}$-derivations, 2-local $\frac{1}{2}$-derivations, Biderivations, Lie-Yamaguti algebras,  nilpotent
algebras.\\
 {\it{ 2020 Mathematics Subject Classification:} 17A40; 17B40; 17B45; 17B60;
17B70} 
\end{keyword}
\end{frontmatter}

\section*{Introduction}
Lie–Yamaguti algebras, often abbreviated as LY-algebras, constitute a class of algebraic systems that extend the notions of both Lie algebras and Lie triple systems. The structure of these algebras is defined by a pair of operations: a binary bracket \([\cdot, \cdot]\) and a ternary bracket \(\{\cdot, \cdot, \cdot\}\). Under this framework, an LY-algebra collapses to a standard Lie algebra if the ternary operation vanishes identically. Conversely, in the absence of the binary product, the structure identifies with a Lie triple system, an object of fundamental importance in the geometry of symmetric spaces.

In recent years, local and 2-local operators have attracted much attention in the
theory of non-associative algebras, including Lie, Jordan, and Leibniz algebras. The
notion of local derivations was introduced by Kadison ~\cite{Kad} and by Larson and Sourour
~\cite{Lar}. Later, in 1997, Semrl introduced the notions of 2-local derivations and 2-local
automorphisms ~\cite{Sem}.A related problem for three-dimensional nilpotent Lie-Yamaguti algebras was considered in ~\cite{Mos} by B. Mosbahi and Y. Bakhtiyor. The authors explicitly computed their derivation algebras and further characterized the matrix forms of their local derivations. Their results demonstrate that, for the non-abelian algebras under consideration, the classes of local derivations are strictly larger than those of ordinary derivations.

Let $A$ be an algebra (not necessarily associative). Recall that a linear map $D \colon A \to A$ is called a $\frac{1}{2}$-derivation if 
$D(ab) = \frac{1}{2} \bigl( D(a)b + aD(b) \bigr)$
for all $a, b \in A$. A linear map $\Delta \colon A \to A$ is termed a local $\frac{1}{2}$-derivation if, for every $x \in A$, there exists a $\frac{1}{2}$-derivation $D_x$ of $A$ such that $\Delta(x) = D_x(x)$. Analogously, a $2$-local $\frac{1}{2}$-derivation is a mapping $\Delta \colon A \to A$ (not necessarily linear) such that, for any pair of elements $x, y \in A$, there exists a $\frac{1}{2}$-derivation $D_{x,y}$ of $A$ satisfying $\Delta(x) = D_{x,y}(x)$ and $\Delta(y) = D_{x,y}(y)$. The study of local and $2$-local $\delta$-derivations was initiated in~\cite{khud}, where local and $2$-local $\frac{1}{2}$-derivations of finite-dimensional solvable Lie algebras with filiform, Heisenberg, and abelian nilradicals were investigated. Later, the study of local and 2-local $\frac{1}{2}$-derivations have been investigated in the context of Lie, Leibniz algebras ~\cite{Yus, Yus1, Yus2, mam}.

The concept of biderivations originally emerged within the framework of associative ring theory ~\cite{br0, br}. More specifically, previous literature demonstrated that any biderivation operating on a noncommutative prime ring is necessarily inner. These maps provide a powerful framework for examining commuting mappings; consequently, several authors have successfully characterized commuting maps by initially determining the full structure of biderivations on the respective algebras, as documented in~\cite{bn, br1, Chen, du, ob1, Han, Gua, Tang, Wang1, Wang2}. In this paper, we introduce and study the notion of biderivations for Lie–Yamaguti algebras.

In this paper we investigate $\frac{1}{2}$-derivations, local $\frac{1}{2}$-derivations  of Lie algebras related to three-dimensional nilpotent Lie-Yamaguti algebras. We first obtain
an explicit description of the general form of $\frac{1}{2}$-derivations on these algebras; see
Theorem~\ref{t1}. Section~3 is devoted to local and
2-local $\frac{1}{2}$-derivations. In Section~4, we study biderivations of  three-dimensional nilpotent Lie-Yamaguti algebras.

\section{Preliminaries}
In this section, we recall the basic definitions and notation that will be used throughout the paper. We first
give the definition of a Lie–Yamaguti algebra and recall the notions of derivations, $\frac{1}{2}$-derivations local $\frac{1}{2}$-derivations and
2-local $\frac{1}{2}$-derivations. We also introduce biderivations
in the setting of Lie–Yamaguti algebras. Finally, we recall the classification of three-dimensional nilpotent
Lie–Yamaguti algebras and its derivations, which will serve as the main algebraic framework for the computations carried
out in the subsequent sections.
\begin{definition}
    A \textbf{Lie–Yamaguti algebra} is a vector space $L$ over a field $F$ endowed with a bilinear multiplication $[-,-] \colon L\times L\to L$ and a trilinear multiplication $\{\cdot, \cdot, \cdot\} \colon L\times L \times L \to L$ satisfying, for all $x, y, z, u, v \in L$, the following identities:
\begin{align*}
\text{(LY1)} \quad & [x, y] + [y, x] = 0; \\
\text{(LY2)} \quad & \{x, y, z\} + \{y, x, z\} = 0; \\
\text{(LY3)} \quad & \{x, y, z\} + \{y, z, x\} + \{z, x, y\} + [[x, y], z] + [[y, z], x] + [[z, x], y] = 0; \\
\text{(LY4)} \quad & \{[x, y], z, u\} + \{[y, z], x, u\} + \{[z, x], y, u\} = 0; \\
\text{(LY5)} \quad & \{x, y, [u, v]\} = [\{x, y, u\}, v] + [u, \{x, y, v\}]; \\
\text{(LY6)} \quad & \{u, v, \{x, y, z\}\} = \{\{u, v, x\}, y, z\} + \{x, \{u, v, y\}, z\} + \{x, y, \{u, v, z\}\}.
\end{align*}
\end{definition}
\begin{definition}
    Let $(L, [\cdot, \cdot], \{\cdot, \cdot, \cdot\})$ be a Lie-Yamaguti (LY) algebra. A linear map $D \colon L \to L$ is called a \textbf{derivation} of $L$ if it satisfies:
\begin{align*}
D([x, y]) &= [D(x), y] + [x, D(y)], \\
D(\{x, y, z\}) &= \{D(x), y, z\} + \{x, D(y), z\} + \{x, y, D(z)\}
\end{align*}
for all $x, y, z \in L$. That is, $D$ is simultaneously a derivation with respect to the binary and ternary operations of $L$.
\end{definition}
The set of all derivations of $L$ is denoted by $Der(L)$. The space $Der(L)$ is a subalgebra of $\mathfrak{gl}(L)$ with respect to the commutator bracket.
\begin{definition}
    Let $(L, [\cdot, \cdot], \{\cdot, \cdot, \cdot\})$ be a Lie-Yamaguti (LY) algebra. A linear map $D \colon L \to L$ is called a \textbf{derivation} of $L$ if it satisfies:
\begin{align*}
D([x, y]) &= \frac{1}{2}([D(x), y] + [x, D(y)]), \\
D(\{x, y, z\}) &= \frac{1}{2}(\{D(x), y, z\} + \{x, D(y), z\} + \{x, y, D(z)\})
\end{align*}
for all $x, y, z \in L$. That is, $D$ is simultaneously a $\frac{1}{2}$derivation with respect to the binary and ternary operations of $L$.
\end{definition}
The set of all $\frac{1}{2}$-derivations of $L$ is denoted by $\frac{1}{2}$-Der(L). The space $\frac{1}{2}-Der(L)$ is a subalgebra of $\mathfrak{gl}(L)$ with respect to the commutator bracket.
\begin{definition}
    A linear map $\Delta \colon L \to L$ is said to be a \textbf{local $\frac{1}{2}$-derivation} if for any $x \in L$, there exists a $\frac{1}{2}$-derivation $D_x$ of $L$ such that
\[
\Delta(x) = D_x(x).
\]
\end{definition}
\begin{definition}
    A (not necessarily linear) map $\Delta \colon L \to L$ is said to be a \textbf{2-local derivation} if for any $x, y \in L$, there exists a derivation $D_{x,y}$ of $L$ such that
\[
\Delta(x) = D_{x,y}(x) \quad \text{and} \quad \Delta(y) = D_{x,y}(y).
\]
\end{definition}
\begin{definition}
    Let $(L, [\cdot, \cdot], \{\cdot, \cdot, \cdot\})$ be a Lie-Yamaguti (LY) algebra. A bilinear map $\delta \colon L\times L \to L$ is called a \textbf{biderivation} of $L$ if it satisfies:
\begin{align*}
\delta([x, y],u) &= [\delta(x,u), y] + [x, \delta(y,u)], \\
\delta(u,[x, y]) &= [\delta(u,x), y] + [x, \delta(u,y)], \\
\delta(\{x, y, z\},u) &= \{\delta(x,u), y, z\} + \{x, \delta(y,u), z\} + \{x, y, \delta(z,u)\}),\\
\delta(u,\{x, y, z\}) &= \{\delta(u,x), y, z\} + \{x, \delta(u,y), z\} + \{x, y, \delta(u,z)\})
\end{align*}
for all $x, y, z,u \in L$. That is, $\delta$ is simultaneously a biderivation with respect to the binary and ternary operations of $L$.
\end{definition}
A biderivation $\delta \in BDer(L)$ is termed \textbf{symmetric} if $\delta(x, y) = \delta(y, x)$ holds for all $x, y \in L$; conversely, it is designated as \textbf{skew-symmetric} if $\delta(x, y) = -\delta(y, x)$ for all $x, y \in L$. We denote the subspaces of all symmetric biderivations and all skew-symmetric biderivations on $L$ as $BDer_+(L)$ and $BDer_-(L)$, respectively.
\begin{remark}
    \begin{enumerate}
    \item  It is elementary to check that a bilinear map $\phi : L \times L \rightarrow L$ is a biderivation if and only if the linear maps
\begin{eqnarray*}
    \phi_x=:\phi(x,\cdot) :  L \rightarrow L,\, y\mapsto \phi_x(y):=\phi(x,y)\\
     \phi_y=:\phi(\cdot,y) :  L \rightarrow L,\, x\mapsto \phi_y(x):=\phi(x,y)\\
\end{eqnarray*}
 are both derivations of $\mathfrak{L}$ for any $x,y \in \mathfrak{L}$.
    \item The set of all biderivations on $L$ is a vector space.
\end{enumerate}
\end{remark}
Denote the trivial 1-dimensional Lie-Yamaguti algebra by $L_{1,1}$ and the trivial 2-dimensional Lie-Yamaguti algebra by $L_{2,1}$.

\begin{theorem}~\cite{Ab}
Let $L$ be a nilpotent Lie-Yamaguti algebra of dimension 3. Then $L$ is isomorphic to one of the following algebras:
\begin{itemize}
    \item $L_{3,1} = L_{2,1} \oplus L_{1,1}$ (trivial algebra),
    \item $L_{3,2} \colon [e_1, e_2] = e_3$ (Lie algebra),
    \item $L_{3,3} \colon \{e_1, e_2, e_1\} = e_3$ (Lie triple system),
    \item $L_{3,4} \colon [e_1, e_2] = e_3, \ \{e_1, e_2, e_1\} = e_3$.
\end{itemize}
\end{theorem}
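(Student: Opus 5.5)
Since the statement is a classification result quoted from~\cite{Ab}, the natural plan is to reproduce it by a direct structural argument on a $3$-dimensional nilpotent Lie--Yamaguti algebra $L$. First I would use nilpotency to pin down the shape of the operations. Define the lower central series with respect to both products, $L^1=L$ and $L^{k+1}=[L^k,L]+\{L^k,L,L\}+\{L,L^k,L\}+\{L,L,L^k\}$. Nilpotency forces $L^2\subsetneq L$. I would then show that in dimension $3$ one has $\dim L^2\le 1$ and $L^3=0$. The case $\dim L^2=2$ would be excluded by checking that $L^3\neq 0$ then generates all of $L^2$ (a $1$-dimensional quotient $L/L^2$ cannot produce a $2$-dimensional derived space, because both brackets are multilinear and skew in the relevant slots). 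If $L^2=0$, we are in the trivial case $L_{3,1}$.

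Assume now $L^2=\mathbb{F}e_3$. Then $e_3$ is central for both operations and all products land in $\mathbb{F}e_3$. By (LY1) the binary bracket induces an alternating form on $L/\mathbb{F}e_3$, which is $2$-dimensional. Hence, after a suitable choice of basis, either $[e_1,e_2]=\alpha e_3$ with $\alpha\in\{0,1\}$ and all other brackets zero. The ternary bracket factors through $(L/\mathbb{F}e_3)^{\otimes 3}$, and by (LY2) it is skew in its first two arguments. Thus it is determined by $\{e_1,e_2,e_1\}=a e_3$ and $\{e_1,e_2,e_2\}=b e_3$.

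Next I would impose the identities. (LY4), (LY5) and (LY6) hold automatically, since every iterated product vanishes. (LY3) reduces to the cyclic sum of the ternary bracket, because the double brackets $[[x,y],z]$ vanish. For basis triples, this cyclic sum vanishes by skewness. The key normalization is then to apply a change of basis in $GL_2$ on $\mathrm{span}(e_1,e_2)$, together with rescaling of $e_3$. This should bring $(a,b)\neq 0$ to $(1,0)$, while keeping $[e_1,e_2]$ in normal form.

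Concretely, the ternary data is the linear form $z\mapsto \{e_1,e_2,z\}$ on the $2$-space, scaled by the determinant. If $\alpha=0$, any nonzero form can be moved to $e_1^*$, giving $L_{3,3}$. If $\alpha=1$, I may use $SL_2$ combined with scaling; the stabilizer of the alternating form still acts transitively on nonzero linear forms, giving $L_{3,4}$. If both data vanish we get $L_{3,1}$, and the case with only the bracket gives $L_{3,2}$.

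Non-isomorphism follows from the invariants ``binary bracket zero or not'' and ``ternary bracket zero or not.'' The main obstacle I expect is the first step, namely ruling out $\dim L^2=2$ carefully with the combined central series. I would handle it by a direct case check: with $L=\mathrm{span}(x)+L^2$, every product involves at least two copies of $x$ or lands in $L^3$, and skewness kills the terms involving two copies of $x$.
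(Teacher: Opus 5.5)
Your argument is correct. It also takes a genuinely different route from the paper, which gives no proof and simply imports the list from \cite{Ab}; you replace the citation with a self-contained derivation.

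Your key lemma is that a nilpotent Lie--Yamaguti algebra cannot have $\dim L/L^2=1$, because then $L^2\subseteq L^3$. Hence in dimension $3$ either $L^2=0$, or $L^2=\mathbb{F}e_3$ is annihilated in every slot and $L/\mathbb{F}e_3\cong L_{2,1}$. The problem then reduces to classifying pairs $(\alpha,\lambda)$, where $[x,y]=\alpha\det(x,y)e_3$ and $\{x,y,z\}=\det(x,y)\lambda(z)e_3$. The group $GL_2(\mathbb{F})\times\mathbb{F}^{*}$ acts by $(g,c)\cdot(\alpha,\lambda)=\bigl(c^{-1}\det g\,\alpha,\;c^{-1}\det g\,\lambda\circ g\bigr)$, and your orbit analysis is right: three nonzero orbits, plus the trivial algebra.

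The citation buys brevity. Your route makes explicit the notion of nilpotency being used, which the paper never defines. It also works over any field of characteristic $\neq 2$ and shows transparently why exactly four algebras occur.

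Two small repairs are needed.

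\textbf{The exclusion of $\dim L^2=2$.} Your dichotomy is misstated: a term such as $\{x,a,x\}$ with $a\in L^2$ is not killed by skewness. Say instead that the only terms with no entry from $L^2$ are $[x,x]$ and $\{x,x,x\}$, which vanish by (LY1) and (LY2). Every other term lies in $L^3$ by definition, so $L^2=L^3=\cdots$, and nilpotency forces $L^2=0$.

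\textbf{The definition of the series.} Note explicitly that the extra slots in your $L^{k+1}$ are redundant. By (LY2), $\{L,L^k,L\}=\{L^k,L,L\}$. By (LY3), each $\{x,y,z\}$ with $z\in L^k$ lies in $\{L^k,L,L\}+[L^k,L]+[L^{k+1},L]$. So your $L^k$ coincides with the more common series $[L^k,L]+\{L^k,L,L\}$, and your notion of nilpotency is the standard one.
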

\begin{theorem}~\cite{Mos}\label{tt0}
The derivation algebras of the three-dimensional nilpotent Lie–Yamaguti algebras are as follows:
\begin{align*}
Der(L_{3,1}) &= \left\{ \begin{pmatrix} a_{11} & a_{12} & a_{13} \\ a_{21} & a_{22} & a_{23} \\ a_{31} & a_{32} & a_{33} \end{pmatrix} \;\middle|\; a_{ij} \in F \right\}, \\[2ex]
Der(L_{3,2}) &= \left\{ \begin{pmatrix} a & b & 0 \\ c & d & 0 \\ p & q & a + d \end{pmatrix} \;\middle|\; a, b, c, d, p, q \in F \right\}, \\[2ex]
Der(L_{3,3}) &= \left\{ \begin{pmatrix} a & 0 & 0 \\ b & d & 0 \\ p & q & 2a + d \end{pmatrix} \;\middle|\; a, b, d, p, q \in F \right\},
\end{align*}%
and
\[
Der(L_{3,4}) = \left\{ \begin{pmatrix} 0 & 0 & 0 \\ c & d & 0 \\ p & q & d \end{pmatrix} \;\middle|\; c, d, p, q \in F \right\}.
\]
\end{theorem}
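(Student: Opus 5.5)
The statement is Theorem~\ref{tt0}, the list of derivation algebras. I would prove it by direct matrix computation, one algebra at a time. Write a linear map $D$ as $D(e_j)=\sum_i a_{ij}e_i$, so the $j$-th column of the matrix is $D(e_j)$. By bilinearity and trilinearity, together with the skew-symmetries (LY1) and (LY2), it is enough to impose the two derivation identities on basis elements. Only the nonzero products $[e_1,e_2]=e_3$ and $\{e_1,e_2,e_1\}=e_3$ enter.

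For $L_{3,1}$ every product vanishes, so both identities hold for any linear map, and $Der(L_{3,1})=\mathfrak{gl}_3(F)$.

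For $L_{3,2}$ only the binary identity is nontrivial.
\begin{itemize}
\item Take $(x,y)=(e_1,e_2)$. The left side is $D(e_3)$, and the right side is $[D e_1,e_2]+[e_1,De_2]=(a_{11}+a_{22})e_3$. This forces $a_{13}=a_{23}=0$ and $a_{33}=a_{11}+a_{22}$.
\item Take $(x,y)=(e_1,e_3)$. The left side is $0$, and the right side is $[De_1,e_3]+[e_1,De_3]=a_{23}e_3$. This gives $a_{23}=0$ again.
\item Take $(x,y)=(e_2,e_3)$. In the same way this gives $a_{13}=0$.
\end{itemize}
These are the stated conditions.

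For $L_{3,3}$ the ternary identity is checked on every triple $(e_i,e_j,e_k)$.
\begin{itemize}
\item The triple $(e_1,e_2,e_1)$ has left side $D(e_3)$. The right side is $\{De_1,e_2,e_1\}+\{e_1,De_2,e_1\}+\{e_1,e_2,De_1\}$. The only bracket that survives is $\{e_1,e_2,e_1\}$, together with $\{e_2,e_1,e_1\}=-e_3$. Collecting terms gives $(a_{11}+a_{22}+a_{11})e_3$, so $D(e_3)=(2a_{11}+a_{22})e_3$.
\item The triple $(e_1,e_2,e_2)$ has left side $0$. The right side reduces to $\{e_1,e_2,De_2\}=a_{12}e_3$, so $a_{12}=0$.
\item Triples containing $e_3$ yield $a_{13}=a_{23}=0$.
\end{itemize}
All remaining triples are either identically zero or repeat these conditions.

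For $L_{3,4}$ the conditions from the binary and ternary identities are imposed together. This gives $a_{33}=a_{11}+a_{22}$ and $a_{33}=2a_{11}+a_{22}$, hence $a_{11}=0$, $a_{12}=0$ and $a_{33}=a_{22}$. That is the stated form.

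The main obstacle is the ternary case. One must make sure that no basis triple, such as $(e_2,e_1,e_1)$ or triples with $e_3$ in the last slot, adds a constraint beyond those listed. I would handle this by enumerating all triples up to the (LY2) symmetry. For each one I would note which $\{e_i,e_j,e_k\}$ can be nonzero after one entry is replaced by $D(\cdot)$.
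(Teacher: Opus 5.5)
Your proof is correct. It is the same basis-by-basis solution of the structure-constant linear system that the paper outlines in Section~2 (there with the extra factor $\frac{1}{2}$); the paper itself gives no proof of this theorem and simply quotes it from \cite{Mos}. Your conditions, including the observation that $Der(L_{3,4})=Der(L_{3,2})\cap Der(L_{3,3})$, reproduce the stated matrices. One cosmetic point: the bracket $\{e_2,e_1,e_1\}=-e_3$ does not occur on the right side for the triple $(e_1,e_2,e_1)$, whose three terms contribute $a_{11}$, $a_{22}$, $a_{11}$ through $\{e_1,e_2,e_1\}$ alone. It arises only for the (LY2)-equivalent triple $(e_2,e_1,e_1)$, which gives the same condition.
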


\section{Algorithm for $\frac{1}{2}$-derivations}
Let $\{e_1, e_2, \dots, e_n\}$ be a basis of an $n$-dimensional Lie-Yamaguti algebra $(L, [\cdot, \cdot], \{\cdot, \cdot, \cdot\})$. The binary and ternary multiplications are defined by the structure constants $\gamma_{ij}^k, \theta_{ijk}^l \in F$ such that
\[
[e_i, e_j] = \sum_{k=1}^n \gamma_{ij}^k e_k, \quad \{e_i, e_j, e_k\} = \sum_{l=1}^n \theta_{ijk}^l e_l, \quad 1 \le i, j, k \le n.
\]
A linear map $D \in End(L)$ is represented by a matrix $(\alpha_{pq})$ where
\[
D(e_q) = \sum_{p=1}^n \alpha_{pq} e_p.
\]
The $\frac{1}{2}$-derivation condition requires:
\begin{align*}
D([e_i, e_j]) &= \frac{1}{2}([D(e_i), e_j] + [e_i, D(e_j)]), \\
D(\{e_i, e_j, e_k\}) &=\frac{1}{2}( \{D(e_i), e_j, e_k\} + \{e_i, D(e_j), e_k\} + \{e_i, e_j, D(e_k)\}).
\end{align*}
Using the structure constants, these conditions become the following systems of equations:

For the binary bracket:
\[
\sum_{r=1}^n \gamma_{ij}^r \alpha_{tr} = \frac{1}{2}\left(\sum_{p=1}^n \left( \alpha_{pi} \gamma_{pj}^t + \alpha_{pj} \gamma_{ip}^t \right)\right), \quad \forall 1 \le i, j, t \le n.
\]

For the ternary product:
\[
\sum_{s=1}^n \theta_{ijk}^s \alpha_{ts} =\frac{1}{2}\left( \sum_{p=1}^n \left( \alpha_{pi} \theta_{pjk}^t + \alpha_{pj} \theta_{ipk}^t + \alpha_{pk} \theta_{ijp}^t \right)\right), \quad \forall 1 \le i, j, k, t \le n.
\]
Solving this combined system of linear equations provides all derivations of the Lie-Yamaguti algebra $L$.
\begin{theorem}\label{t1}
     The $\frac{1}{2}$-derivation algebras of the three-dimensional nilpotent Lie–Yamaguti algebras are as
follows:
\begin{align*}
\frac{1}{2}Der(L_{3,1}) &= \left\{ \begin{pmatrix} a_{11} & a_{12} & a_{13} \\ a_{21} & a_{22} & a_{23} \\ a_{31} & a_{32} & a_{33} \end{pmatrix} \;\middle|\; a_{ij} \in F \right\}, \\[2ex]
\frac{1}{2}Der(L_{3,2}) &= \left\{ \begin{pmatrix} a & b & 0 \\ c & d & 0 \\ p & q & a + d \end{pmatrix} \;\middle|\; a, b, c, d, p, q \in F \right\}, \\[2ex]
\frac{1}{2}Der(L_{3,3}) &= \left\{ \begin{pmatrix} a & 0 & 0 \\ b & d & 0 \\ p & q & 2a + d \end{pmatrix} \;\middle|\; a, b, d, p, q \in F \right\},
\end{align*}%
and
\[
\frac{1}{2}Der(L_{3,4}) = \left\{ \begin{pmatrix} 0 & 0 & 0 \\ c & d & 0 \\ p & q & d \end{pmatrix} \;\middle|\; c, d, p, q \in F \right\}.
\]
\end{theorem}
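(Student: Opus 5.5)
The plan is to apply the algorithm of the preceding section to each algebra $L_{3,1},\dots,L_{3,4}$ in turn. I write an arbitrary linear map as $D(e_q)=\sum_p \alpha_{pq}e_p$, substitute the nonzero structure constants into the binary and ternary systems, and solve the resulting linear equations. Since each algebra has at most one nonzero binary product and one nonzero ternary product among basis elements (up to the symmetries (LY1) and (LY2)), only a handful of index triples $(i,j,t)$ and quadruples $(i,j,k,t)$ give nontrivial equations. I would organize the work by listing, for each algebra, exactly those triples and quadruples, and then comparing the solution space with the corresponding family in Theorem~\ref{tt0}. Throughout, I use that a $\frac{1}{2}$-derivation condition is linear in $D$, so the solution set is a subspace of $\mathfrak{gl}(L)$.

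For $L_{3,1}$ both products vanish identically, so every equation reads $0=0$ and every $3\times 3$ matrix is admissible; this gives the first family. For $L_{3,2}$ the only relation is $[e_1,e_2]=e_3=-[e_2,e_1]$.

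\begin{itemize}
\item Taking $(i,j)$ with $e_3$ as an argument forces the coefficients of $e_1,e_2$ in $D(e_3)$ to vanish, because $[D(e_3),e_j]$ only sees those coefficients while the left side is $D(0)=0$. This gives the zeros $\alpha_{13}=\alpha_{23}=0$.
\item Taking $(i,j)=(1,2)$ and projecting onto $e_3$ expresses $\alpha_{33}$ through $\alpha_{11}+\alpha_{22}$, the trace of the upper-left block.
\item The entries $a,b,c,d,p,q$ in the remaining positions are unconstrained.
\end{itemize}

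For $L_{3,3}$ the relevant products are $\{e_1,e_2,e_1\}=e_3$ together with $\{e_2,e_1,e_1\}=-e_3$. By (LY3) with vanishing binary part, $\{e_1,e_1,e_2\}=0$ and the cyclic sum vanishes, so these must be entered into the table consistently before solving.

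\begin{itemize}
\item Quadruples containing $e_3$ give $\alpha_{13}=\alpha_{23}=0$.
\item Quadruples of the form $\{e_1,e_1,e_1\}$, $\{e_2,e_2,e_1\}$ and $\{e_1,e_2,e_2\}$ have left side zero, and their $e_3$-components kill $\alpha_{21}$-type cross terms, giving $\alpha_{12}=0$.
\item The quadruple $(1,2,1)$ projected onto $e_3$ ties $\alpha_{33}$ to $2\alpha_{11}+\alpha_{22}$, the first index contributing twice.
\end{itemize}

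$L_{3,4}$ combines both sets of equations. The binary relation ties $\alpha_{33}$ to $\alpha_{11}+\alpha_{22}$, while the ternary relation ties it to $2\alpha_{11}+\alpha_{22}$. Intersecting these two conditions should isolate $\alpha_{11}$, yielding $\alpha_{11}=0$, with $\alpha_{12}=0$ and $\alpha_{13}=\alpha_{23}=0$ inherited from the two previous cases. This leaves the free parameters $c,d,p,q$.

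The main obstacle, and the step I would check most carefully, is the diagonal entry $\alpha_{33}$ in each nontrivial case, because it is the only place where the factor $\frac{1}{2}$ actually enters. In the $e_3$-component of the $(1,2)$ (respectively $(1,2,1)$) equation, the left side is $\alpha_{33}$ and the right side is $\frac{1}{2}$ times the trace-type combination. The displayed families in Theorem~\ref{t1} carry exactly the coefficients of Theorem~\ref{tt0} in the $(3,3)$ position: $a+d$, $2a+d$ and $d$. So I would write out these specific equations explicitly and compare the resulting relation against the stated $(3,3)$ entries.

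I must flag that, taking the definition exactly as printed, the $e_3$-component of $D([e_1,e_2])$ for $L_{3,2}$ appears to give $\alpha_{33}=\frac{1}{2}(\alpha_{11}+\alpha_{22})$ rather than $\alpha_{11}+\alpha_{22}$. This mismatch is the point at which the argument could fail to reproduce the stated families, and it needs to be resolved before the computation can be taken as a proof. The remaining entries of each matrix are settled by the routine off-diagonal equations above.
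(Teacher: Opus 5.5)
\textbf{The statement is false as written, and the mismatch you flag is what shows it.} It cannot be resolved in favour of the theorem. Your method is the right one, and it is all the paper offers: the theorem is asserted right after the general algorithm, and its four families are copied verbatim from the derivation algebras of Theorem~\ref{tt0}. With the definition as printed, the claim fails for $L_{3,2}$, $L_{3,3}$ and $L_{3,4}$.

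Carrying your own equations through gives the following.
For $L_{3,2}$, the $(1,2)$ equation gives $\alpha_{13}=\alpha_{23}=0$ and $\alpha_{33}=\frac{1}{2}(\alpha_{11}+\alpha_{22})$.
For $L_{3,3}$, where $\{x,y,z\}=(x_1y_2-x_2y_1)z_1e_3$, the quadruple $(1,2,2)$ gives $\alpha_{12}=0$, and $(1,2,1)$ gives $\alpha_{33}=\frac{1}{2}(2\alpha_{11}+\alpha_{22})$. The same halving occurs here, although your paragraphs on $L_{3,3}$ and $L_{3,4}$ silently drop it.
For $L_{3,4}$, the two halved relations still force $\alpha_{11}=0$, but then $\alpha_{33}=\frac{1}{2}\alpha_{22}$.

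Hence the correct $(3,3)$ entries are $\frac{a+d}{2}$, $a+\frac{d}{2}$ and $\frac{d}{2}$. All other entries are as displayed, so the dimensions $9,6,5,4$ are unchanged. The displayed families are exactly the solutions of the system with the factor $\frac{1}{2}$ deleted, i.e.\ the ordinary derivations.

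\textbf{You should record explicit counterexamples rather than leave a caveat.} They refute the displayed equalities in both directions.
On $L_{3,2}$, the identity map is a $\frac{1}{2}$-derivation, since $[x,y]=\frac{1}{2}([x,y]+[x,y])$ and there is no ternary product. It is not in the displayed family, because $a=d=1$ forces the $(3,3)$ entry to be $2$.
Conversely, $\mathrm{diag}(1,1,2)$ is in the displayed family for $L_{3,2}$, but $D([e_1,e_2])=2e_3$ while $\frac{1}{2}([De_1,e_2]+[e_1,De_2])=e_3$.
Likewise, $\mathrm{diag}(1,0,2)$ lies in the displayed family for $L_{3,3}$. On $\{e_1,e_2,e_1\}$ it gives $2e_3$ against $\frac{1}{2}(e_3+0+e_3)=e_3$.
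Finally, $\mathrm{diag}(0,1,1)$ lies in the displayed family for $L_{3,4}$. On $[e_1,e_2]$ it gives $e_3$ against $\frac{1}{2}e_3$.

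Only the $L_{3,1}$ part holds as written. What your computation actually proves is the corrected theorem, not the displayed one.
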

\section{Local and 2-local $\frac{1}{2}$-derivations}
In this section, we study local and 2-local $\frac{1}{2}$-derivations of three-dimensional nilpotent Lie–Yamaguti algebras. Using the explicit descriptions of ordinary $\frac{1}{2}$-derivations obtained in the previous section, we
determine the matrix forms of all local $\frac{1}{2}$-derivations for each algebra $L_{3,i}$, $i = 1, 2, 3, 4$. We show that,
for the non-abelian algebras under consideration, the space of local $\frac{1}{2}$-derivations is strictly larger than the corresponding $\frac{1}{2}$-derivation algebra. Moreover, we construct examples of 2-local $\frac{1}{2}$-derivations which are not $\frac{1}{2}$-derivations. These results show that, in contrast to some rigid classes of Lie-type algebras,
three-dimensional nilpotent Lie–Yamaguti algebras admit genuinely local mappings which cannot be
represented by a single global $\frac{1}{2}$-derivation.
\begin{theorem}
    Let $\Delta$ be a linear operator on $L_{3,i}$. Then the local $\frac{1}{2}$-derivations of the three-dimensional
nilpotent Lie–Yamaguti algebras are described as follows:
\begin{enumerate}
\item $\Delta$ is a local derivation on $L_{3,1}$ if and only if
$$\Delta\in End(L_{3,1}).$$
\item $\Delta$ is a local derivation on $L_{3,2}$ if and only if its matrix has the form
$$\Delta= \begin{pmatrix} a_{11} & a_{12} & 0 \\ a_{21} & a_{22} & 0 \\ a_{31} & a_{32} & a_{33} \end{pmatrix}, \quad a_{ij} \in \mathbb{F} ,$$
\item $\Delta$ is a local derivation on $L_{3,3}$ if and only if its matrix has the form
$$\Delta=  \begin{pmatrix} b_{11} & 0 & 0 \\ b_{21} & b_{22} & 0 \\ b_{31} & b_{32} & b_{33} \end{pmatrix}, \quad b_{ij} \in \mathbb{F} ,$$
\item $\Delta$ is a local derivation on $L_{3,4}$ if and only if its matrix has the form
\[
\Delta=  \begin{pmatrix} 0 & 0 & 0 \\ c_{21} & c_{22} & 0 \\ c_{31} & c_{32} & c_{33} \end{pmatrix} \quad c_{ij} \in \mathbb{F}.
\]
\end{enumerate}
\end{theorem}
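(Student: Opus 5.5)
The proof will be carried out separately for each $L_{3,i}$, using the explicit description of $\frac{1}{2}Der(L_{3,i})$ from Theorem~\ref{t1}. In every case there are two directions. For \emph{necessity}, we take a local $\frac{1}{2}$-derivation $\Delta$ and evaluate it on the basis vectors $e_1,e_2,e_3$. Since $\Delta(e_j)=D_{e_j}(e_j)$, the $j$-th column of $\Delta$ must be the $j$-th column of some matrix in $\frac{1}{2}Der(L_{3,i})$; this immediately produces the zero pattern. For \emph{sufficiency}, we take an arbitrary $\Delta$ of the stated shape and an arbitrary $x=x_1e_1+x_2e_2+x_3e_3$, and exhibit a $\frac{1}{2}$-derivation $D_x$ with $D_x(x)=\Delta(x)$.

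For $L_{3,1}$ there is nothing to prove, since every linear map is a $\frac{1}{2}$-derivation. For $L_{3,2}$, necessity is read off from the third column $(0,0,a+d)^T$. For sufficiency we set $\Delta(x)=(y_1,y_2,y_3)^T$ with $y_1,y_2$ depending only on $x_1,x_2$. We need $a,b,c,d,p,q$ with
\[
ax_1+bx_2=y_1,\quad cx_1+dx_2=y_2,\quad px_1+qx_2+(a+d)x_3=y_3.
\]
If $(x_1,x_2)\neq 0$, we choose $a=d=0$, solve the first two equations using $b,c$ (or $a,d$), and then fix $p,q$. If $x_1=x_2=0$, then $y_1=y_2=0$ automatically, and $a+d=a_{33}$ handles the third coordinate.

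For $L_{3,3}$ the same scheme applies. The first column $(a,b,p)$ and the second column $(0,d,q)$ give the zeros in positions $(1,2)$, $(1,3)$ and $(2,3)$. For sufficiency we must solve
\[
ax_1=y_1,\quad bx_1+dx_2=y_2,\quad px_1+qx_2+(2a+d)x_3=y_3.
\]
If $x_1\neq0$, we take $a=y_1/x_1$, choose $b$ to fix $y_2$, and $p$ to fix $y_3$. If $x_1=0$, then $y_1=0$ because $\Delta$ is lower triangular, so $a$ is free. If moreover $x_2\neq0$, then $d$ and $q$ absorb the remaining coordinates. If $x_1=x_2=0$, then $2a+d=b_{33}$ is solvable.

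$L_{3,4}$ is the case where I expect care to be needed. Here $D(e_3)=de_3$ and $D(e_2)=de_2+qe_3$ share the parameter $d$, while the first row vanishes. We must solve
\[
cx_1+dx_2=y_2,\quad px_1+qx_2+dx_3=y_3.
\]
When $x_1\neq0$, $c$ and $p$ are free and we are done. When $x_1=0$ and $x_2\neq0$, we take $d=y_2/x_2$ and use $q$ for $y_3$. When $x_1=x_2=0$, we need $d=c_{33}$, which is fine. The only real obstacle across all four cases is this degenerate-vector bookkeeping, ensuring that in each subcase the stated zero pattern forces the uncontrollable coordinates of $\Delta(x)$ to vanish. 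Finally, to show that the inclusion of $\frac{1}{2}$-derivations in local $\frac{1}{2}$-derivations is strict, we take for example $a_{33}\neq a_{11}+a_{22}$ in $L_{3,2}$.
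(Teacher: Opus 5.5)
Your proposal is correct and is essentially the paper's proof: necessity comes from $\Delta(e_j)=D_{e_j}(e_j)$ forcing each column of $\Delta$ to be a column of some $\frac{1}{2}$-derivation (the paper writes out only $x=e_3$ for $L_{3,2}$), and sufficiency is the same case split on whether $x_1$, $x_2$ vanish, which you carry out for $L_{3,3}$ and $L_{3,4}$ where the paper only says ``analogous'' (in $L_{3,3}$ the zeros at positions $(1,3)$, $(2,3)$ come from the third column $(0,0,2a+d)^T$, not the first). One caveat comes from Theorem~\ref{t1}: a direct computation gives $D(e_3)=D([e_1,e_2])=\tfrac12(a+d)e_3$ in $L_{3,2}$, and the $(3,3)$-entries for $L_{3,3}$, $L_{3,4}$ are likewise $\tfrac{2a+d}{2}$ and $\tfrac{d}{2}$, so the printed $(3,3)$-entries are off by a factor $2$; your argument is unaffected, since it only uses the zero pattern and the solvability of the $(3,3)$-equation when $x_1=x_2=0$, but your closing strictness witness should read $a_{33}\neq\tfrac{a_{11}+a_{22}}{2}$ (the identity of $L_{3,2}$ satisfies $a_{33}\neq a_{11}+a_{22}$ yet is a $\frac12$-derivation).
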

\begin{proof}
 For the abelian algebra $L_{3,1}$, all binary and ternary products vanish identically. Consequently, every linear operator acting on 
$L_{3,1}$ constitutes a derivation, which immediately yields:
\[
\text{Loc}{\frac{1}{2}}\text{Der}(L_{3,1}) = \frac{1}{2}\text{Der}(L_{3,1}) = \text{End}(L_{3,1}).
\]

Consider a local derivation $\Delta$ acting on $L_{3,2}$, and let 
$B$ denote its matrix representation with respect to the standard basis:
$$B=\begin{pmatrix} a_{11} & a_{12} & a_{13} \\ a_{21} & a_{22} & a_{23} \\ a_{31} & a_{32} & a_{33} \end{pmatrix}$$
According to the definition of a local $\frac{1}{2}$-derivation, for each 
element $x \in L_{3,2}$, there corresponds a global 
$\frac{1}{2}$-derivation $D_x \in \frac{1}{2}\text{Der}(L_{3,2})$ 
such that the local operator $\Delta$ satisfies:
\begin{equation*}
\Delta(x) = D_x(x).
\end{equation*}
By Theorem ~\ref{t1}, the matrix of $D_x$ has the form
$$D_x=\begin{pmatrix} a_x & b_x & 0 \\ c_x & d_x & 0 \\ p_x & q_x & a_x + d_x \end{pmatrix}.$$
Taking $x = e_3$, we have
$$\Delta(e_3)=D_{e_3}(e_3)\in \mathbb{F}e_3.$$
This implies that $a_{13} = a_{23} = 0$, thereby showing that the matrix 
representation of $\Delta$ reduces to the following form:
$$B=\begin{pmatrix} a_{11} & a_{12} & 0 \\ a_{21} & a_{22} & 0 \\ a_{31} & a_{32} & a_{33} \end{pmatrix}$$
Conversely, assuming that $\Delta$ is a linear operator of this form and let 
$$x=x_1e_1+x_2e_2+x_3e_3.$$
It suffices to demonstrate the existence of a $\frac{1}{2}$-derivation $D_x \in \frac{1}{2}Der(L_{3,2})$ satisfying
$$\Delta(x) = D_x(x).$$
In terms of their matrix representations, this identity reduces to the following system of equations:
\begin{equation}\label{s0}
\begin{cases}
    a_xx_1+b_xx_2=a_{11}x_1+a_{12}x_2,\\
    c_xx_1+d_xx_2=a_{21}x_1+a_{22}x_2,\\
    p_xx_1+q_xx_2+(a_x+d_x)x_3=a_{31}x_1+a_{32}x_2+a_{33}x_3.
\end{cases}
\end{equation}
If $x_1\neq0$, we may choose
$$a_x=a_{11},\quad b_x=a_{12}, \quad c_x=a_{21}, \quad d_x=a_{22}, \quad q_x=0,$$
and choose $$p_x=a_{31}+\frac{a_{31}x_2+(a_{33}-a_{11}-a_{22})x_3}{x_1}.$$
Therefore, the system (~\ref{s0}) is verified.\\
If $x_1= 0$ and $x_2\neq 0$, we may choose
$$a_x=0,\quad b_x=a_{12}, \quad c_x=0, \quad d_x=a_{22}, \quad p_x=0,$$
and taking $$q_x=a_{32}+\frac{(a_{33}-a_{22})x_3}{x_2}.$$
Then again, the system (~\ref{s0}) is verified.\\
If $x_1=x_2=0$, then $x=x_3e_3$. We take $$a_x=a_{33}, \quad d_x=0,$$
while the remaining parameters can be chosen arbitrarily. Consequently,
$$D_x(x)=a_{33}x_3e_3=\Delta(x).$$
Hence, every operator of the aforementioned form is a local $\frac{1}{2}$-derivation on $L_{3,2}$.\\
An analogous argument applies to the cases $L_{3,3}$ and $L_{3,4}$. This concludes the proof.   
\end{proof}
\begin{corollary}
    The following table summarizes the dimensions of the $\frac{1}{2}$-derivation algebras and the spaces of local $\frac{1}{2}$-derivations of $3$-dimensional nilpotent Lie–Yamaguti algebras:
$$\begin{tabular}{lcccc}
\hline
Algebras & $L_{3,1}$ & $L_{3,2}$ & $L_{3,3}$ & $L_{3,4}$ \\
\hline
$\dim \text{Der}(L_{3,i})$ & 9 & 6 & 5 & 4 \\
\hline
$\dim \text{LocDer}(L_{3,i})$ & 9 & 7 & 6 & 5 \\
\hline
\end{tabular}$$
\end{corollary}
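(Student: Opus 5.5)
The corollary is a dimension count, so I will read off each entry from the two explicit descriptions already available. For the first row I use Theorem~\ref{t1}, whose four families are parametrized freely by the listed entries. $\frac{1}{2}Der(L_{3,1})=End(L_{3,1})$ has the $9$ free parameters $a_{ij}$. For $L_{3,2}$ the free parameters are $a,b,c,d,p,q$, giving $6$, because the $(3,3)$ entry $a+d$ is determined. $L_{3,3}$ has $a,b,d,p,q$, giving $5$. $L_{3,4}$ has $c,d,p,q$, giving $4$. In each case the map from parameters to matrices is injective, since each parameter occupies its own matrix entry. So these numbers are genuine dimensions of subspaces of $\mathfrak{gl}(L)$. (The table's label ``$\dim Der$'' agrees with Theorem~\ref{tt0} as well, since the two descriptions coincide.)

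For the second row I use the preceding theorem on local $\frac{1}{2}$-derivations. For $L_{3,1}$ it gives $End(L_{3,1})$, of dimension $9$. For $L_{3,2}$ the allowed matrices are exactly those with $a_{13}=a_{23}=0$ and the other seven entries free, giving $7$. For $L_{3,3}$ the zero pattern kills $a_{12},a_{13},a_{23}$, leaving six free entries. For $L_{3,4}$ it kills the whole first row and $a_{23}$, leaving five free entries. In each case the set of local $\frac{1}{2}$-derivations is the linear subspace cut out by fixing certain coordinates to zero, so its dimension is the number of free coordinates.

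The only point needing care is that the cases $L_{3,3}$ and $L_{3,4}$ of the preceding theorem were only asserted ``analogously''. I would check the converse direction there by mimicking the $L_{3,2}$ construction before trusting the count.

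\textbf{Case $L_{3,3}$.} Given $x=x_1e_1+x_2e_2+x_3e_3$ and a lower triangular $\Delta=(b_{ij})$, I need $a,b,d,p,q$ with
\begin{itemize}
\item $ax_1=b_{11}x_1$,
\item $bx_1+dx_2=b_{21}x_1+b_{22}x_2$,
\item $px_1+qx_2+(2a+d)x_3=b_{31}x_1+b_{32}x_2+b_{33}x_3$.
\end{itemize}
If $x_1\ne0$: take $a=b_{11}$, $b=b_{21}$, $d=b_{22}$, and solve the third equation for $p$. If $x_1=0\ne x_2$: take $d=b_{22}$ and solve for $q$. If $x=x_3e_3$: choose $a,d$ with $2a+d=b_{33}$.

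\textbf{Case $L_{3,4}$.} The equations are the same with $a=0$ and $2a+d$ replaced by $d$, and the same case split works. When $x_1\ne0$ I use $c$ and $p$ as the free unknowns. When $x_1=0\ne x_2$ I take $d=c_{22}$ and solve for $q$. When $x=x_3e_3$ I set $d=c_{33}$.

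The forward direction uses that $D(e_1)\in\mathbb{F}e_1+\dots$ has first coordinate $a$ times $e_1$ only. That is, $\Delta(e_1)$, $\Delta(e_2)$ and $\Delta(e_3)$ must lie in the images dictated by the zero patterns, which forces the stated zeros.

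With these verifications the table follows by counting free entries.
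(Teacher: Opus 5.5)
Your argument is correct and follows the paper's own route: the table is a count of the free parameters in Theorem~\ref{t1} and of the free entries in the local theorem, and your case check for $L_{3,3}$ and $L_{3,4}$ is the paper's $L_{3,2}$ argument carried over. One caveat: Theorem~\ref{t1} as printed has the wrong $(3,3)$ entries --- applying the $\frac12$-rule to $e_3=[e_1,e_2]$ and to $e_3=\{e_1,e_2,e_1\}$ gives $\frac{a+d}{2}$, $\frac{2a+d}{2}$ and $\frac{d}{2}$ for $L_{3,2}$, $L_{3,3}$, $L_{3,4}$ (e.g.\ $\mathrm{id}$ is a $\frac12$-derivation of $L_{3,2}$, whose ternary product vanishes, but its $(3,3)$ entry is $1\neq a+d$), so your parenthetical claim that the $\frac12$-derivation and derivation descriptions coincide is false; none of your counts changes, though, because each corrected $(3,3)$ entry is still determined by the same free parameters and can still be matched to $b_{33}$, resp.\ $c_{33}$, in your case $x=x_3e_3$.
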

We consider the example of a 2-local derivation that is not a derivation, as introduced by Mosbahi et al. in \cite{Mos}, and we show that this example is also a 2-local $\frac{1}{2}$-derivation which is not $\frac{1}{2}$-derivation.

\begin{theorem}
    Each of the algebras $L_{3,i}$ (for $i \in \{1, 2, 3, 4\}$) admits a $2$-local $\frac{1}{2}$-derivation which is not a $\frac{1}{2}$-derivation.
\end{theorem}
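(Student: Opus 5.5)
The plan is to give a single explicit non-additive map that works for all four algebras at once. The tool is the part of $\frac{1}{2}Der(L_{3,i})$ that is common to all four descriptions in Theorem~\ref{t1}. In each of the four algebras the matrices
$$D_{p,q}=\begin{pmatrix} 0&0&0\\ 0&0&0\\ p&q&0\end{pmatrix},\qquad p,q\in\mathbb F,$$
are $\frac{1}{2}$-derivations: take all parameters other than $p,q$ equal to zero (for $L_{3,1}$ every matrix is allowed). For $x=x_1e_1+x_2e_2+x_3e_3$ we have $D_{p,q}(x)=(px_1+qx_2)e_3$. So I will look for a map of the form $\Delta(x)=f(x_1,x_2)e_3$, where $f$ is homogeneous of degree one but not additive. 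The choice I commit to is
$$\Delta(x)=\begin{cases} x_1e_3, & x_2=0,\\ 0, & x_2\neq 0.\end{cases}$$

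\textbf{Step 1: $\Delta$ is 2-local.} Fix $x,y$ and write $\bar x=(x_1,x_2)$, $\bar y=(y_1,y_2)$. We need $p,q$ with $px_1+qx_2=f(\bar x)$ and $py_1+qy_2=f(\bar y)$. I split into three cases.
\begin{itemize}
\item If $\bar x,\bar y$ are linearly independent, this $2\times 2$ system has nonzero determinant and is uniquely solvable.
\item If $\bar x=0$, then $f(\bar x)=0$ and the first equation is vacuous. It only remains to represent $\Delta(y)$ alone, which works by choosing $(p,q)$ on the line defined by the second equation. The case $\bar y=0$ is symmetric, and if both vanish, $D=0$ works.
\item If $\bar y=\mu\bar x$ with $\mu\neq0$ and $\bar x\neq 0$, then $x_2=0$ if and only if $y_2=0$. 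Hence $f(\bar y)=\mu f(\bar x)$, the two equations are proportional, and one solution of the first equation serves for both.
\end{itemize}
In every case $D_{x,y}=D_{p,q}\in\frac{1}{2}Der(L_{3,i})$.

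\textbf{Step 2: $\Delta$ is not a $\frac{1}{2}$-derivation.} It is not even additive. We have $\Delta(e_1)=e_3$ and $\Delta(e_2)=0$, but $\Delta(e_1+e_2)=0\neq e_3$. Since $\frac{1}{2}$-derivations are linear by definition, $\Delta\notin\frac{1}{2}Der(L_{3,i})$ for every $i$.

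The only delicate point is the proportional case in Step~1. There one must check that the homogeneity of $f$ is compatible with the zero set $\{x_2=0\}$, which is invariant under nonzero scaling. This invariance is exactly why the two equations of the system become consistent even though the determinant vanishes. Alternatively, for $L_{3,2},L_{3,3},L_{3,4}$ one could reuse the example of \cite{Mos}: since Theorem~\ref{t1} shows $\frac{1}{2}Der(L_{3,i})=Der(L_{3,i})$ (compare Theorem~\ref{tt0}), 2-local derivations and 2-local $\frac{1}{2}$-derivations coincide. The construction above, however, handles all four algebras uniformly.
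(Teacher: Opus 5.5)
Your proof is correct and is essentially the paper's argument. It uses the same family $D_{p,q}$, the same $2\times 2$ system, and the same split into the nonzero-determinant case and the proportional case settled by homogeneity. The only change is your $f$ in place of the paper's $\phi(x_1,x_2)=x_1^2/x_2$ (with $\phi=0$ when $x_2=0$), and your handling of the zero-vector cases and of non-additivity is more explicit than the paper's.

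Drop the closing aside, though. In fact $\frac{1}{2}Der(L_{3,i})\neq Der(L_{3,i})$: the identity map is a $\frac{1}{2}$-derivation of the Lie algebra $L_{3,2}$ but not a derivation, so the $(3,3)$ entry in Theorem~\ref{t1} should be $\frac{a+d}{2}$. For the same reason, check directly that $D_{p,q}$ is a $\frac{1}{2}$-derivation rather than citing Theorem~\ref{t1}. This is immediate: $D_{p,q}$ maps $L$ into $\mathbb{F}e_3$, every product with an argument in $\mathbb{F}e_3$ vanishes, all products lie in $\mathbb{F}e_3$, and $D_{p,q}(e_3)=0$. So both sides of each defining identity are zero, and your main argument is unaffected.
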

\begin{proof}
    We define a homogeneous, non-additive function $\phi$ on $\mathbb{F}^2$ as follows:
$$\phi(x_1, x_2) = 
\begin{cases} 
\frac{x_1^2}{x_2}, & \text{if } x_2 \neq 0, \\ 
0, & \text{if } x_2 = 0, 
\end{cases}$$
where $(x_1,x_2)\in \mathbb{F}^2$. This function is homogeneous, that is,
$$\phi(\lambda x_1,\lambda x_2)=\lambda \phi(x_1,x_2),$$
for all $\lambda\in \mathbb{F}$, but it is not additive.\\
For $i = 1, 2, 3, 4$, we define the map $\Delta \colon L_{3,i} \to L_{3,i}$ as follows:
$$\Delta(x)=\phi(x_1,x_2)e_3, \quad \forall x=\sum_{i=1}^3x_ie_i\in L_{3,i}.$$
Since the mapping $\Delta$ is non-linear, it cannot be a derivation.\\
New, we show that $\Delta$ constitutes a $2$-local $\frac{1}{2}$-derivation. Let $x, y$ be arbitrary elements of $L_{3,i}$, written in the form
\[
x = \sum_{j=1}^3 x_j e_j, \quad y = \sum_{j=1}^3 y_j e_j.
\]
Our goal is to construct a derivation $D_{x,y} \in \frac{1}{2}\text{Der}(L_{3,i})$ such that
$$\Delta(x)=D_{x,y}(x)\quad \mbox{and} \quad \Delta(y)=D_{x,y}(y).$$
Let $D_{x,y}$ be the linear operator defined by
$$D_{x,y}(e_1)=pe_3, \quad D_{x,y}(e_2)=qe_3, \quad D_{x,y}(e_3)=0.$$
Its matrix is
$$D_{x,y}=\begin{pmatrix}
    0&0&0\\
    0&0&0\\
    p&q&0
\end{pmatrix}.$$
According to Theorem ~\ref{t1}, this matrix constitutes a $\frac{1}{2}$-derivation matrix for all the algebras $L_{3,1}$, $L_{3,2}$, $L_{3,3}$, and $L_{3,4}$.\\
Consequently, it suffices to select $p, q \in \mathbb{F}$ satisfying
$$D_{x,y}(x)=\Delta(x), \quad D_{x,y}(y)=\Delta(y).$$
These relations can be rewritten as the following system:
$$\begin{cases}
    px_1+qx_2=\phi(x_1,x_2),\\
    py_1+qy_2=\phi(y_1,y_2).
\end{cases}$$
If $x_1 y_2 - x_2 y_1 \neq 0$, then the system above uniquely determines the parameters $p$ and $q$.\\
In the case where $x_1 y_2 - x_2 y_1 = 0$, the linear dependence of the vectors $(x_1, x_2)$ and $(y_1, y_2)$, combined with the homogeneity of $\phi$, guarantees the compatibility of the system. Hence, at least one solution exists.\\

Therefore, for every pair $x, y \in L_{3,i}$, there exists a $\frac{1}{2}$-derivation $D_{x,y}$ such that
$$D_{x,y}(x)=\Delta(x), \quad D_{x,y}(y)=\Delta(y).$$
Consequently, $\Delta$ constitutes a $2$-local $\frac{1}{2}$-derivation on $L_{3,i}$ for each $i \in \{1, 2, 3, 4\}$. Due to its non-linearity, $\Delta$ fails to be a $\frac{1}{2}$-derivation. This establishes that every algebra $L_{3,1}$, $L_{3,2}$, $L_{3,3}$, and $L_{3,4}$ admits a $2$-local $\frac{1}{2}$-derivation that is not a $\frac{1}{2}$-derivation.
\end{proof}
\section{biderivations}
In this section, we determine all biderivations of 3-dimensional nilpotent Lie-Yamaguti algebras.\\

The key structural property utilized throughout this section is that if $\delta$ is a biderivation of a Lie–Yamaguti algebra $L$, then for any fixed $a \in L$, both mappings $x \mapsto \delta(a, x)$ and $x \mapsto \delta(x, a)$ are ordinary derivations of $L$. This justifies the application of Theorem~\ref{tt0} to explicitly determine the matrices $D_{e_i}$.

For any $\delta \in BDer(L)$, we define two bilinear maps by 
$$\delta^+(x,y)=\delta(x,y)+\delta(y,x), \hspace{0.3 cm}\delta^-(x,y)=\delta(x,y)-\delta(y,x).$$
It is easy to see that $\delta^+ \in BDer_+(L)$ and $\delta^- \in BDer_-(L)$. Since $\delta =\frac{1}{2}(\delta^+ + \delta^-)$ it follows that
$$BDer(L)=BDer_+(L) \oplus BDer_-(L).$$
To characterize $BDer(L)$, we only need to characterize $BDer_+(L)$ and $BDer_-(L)$.\\
\subsection{Skew-symmetric biderivations}
\begin{theorem}
            \begin{enumerate}
                \item Any skew-symmetric bilinear maps $\delta^-: L_{3,1}\times L_{3,1} \to L_{3,1}$ is a skew-symmetric biderivations of $L_{3,1}$.
                \item Let $\delta: L_{3,2}\times L_{3,2} \to L_{3,2}$ be a skew-symmetric bilinear maps, then $\delta^-$ is a skew-symmetric biderivations of $L_{3,2}$ if and only if there exist $\alpha,\beta,\mu \in \mathbb{F}$ such that 
        $$\delta^-(x,y)=\alpha f_{1,2}(x,y)e_1+\beta f_{1,2}(x,y)e_2+(\mu f_{1,2}(x,y)+\beta f_{1,3}(x,y)-\alpha f_{2,3}(x,y))e_3, \quad \forall x=\sum_{i=1}^3x_ie_i,y=\sum_{i=1}^3y_ie_i \in L_{3,2}.$$
        \item Let $\delta: L_{3,3}\times L_{3,3} \to L_{3,3}$ be a skew-symmetric bilinear maps, then $\delta^-$ is a skew-symmetric biderivations of $L_{3,3}$ if and only if there exist $\alpha,\beta \in \mathbb{F}$ such that 
        $$\delta^-(x,y)=\alpha f_{1,2}(x,y)e_2+(\alpha f_{1,3}(x,y)+\beta f_{1,2}(x,y))e_3, \quad \forall x=\sum_{i=1}^3x_ie_i,y=\sum_{i=1}^3y_ie_i \in L_{3,3}.$$
        \item Let $\delta: L_{3,4}\times L_{3,4} \to L_{3,4}$ be a skew-symmetric bilinear maps, then $\delta^-$ is a skew-symmetric biderivations of $L_{3,4}$ if and only if there exist $\alpha,\beta \in \mathbb{F}$ such that 
        $$\delta^-(x,y)=\alpha f_{1,2}(x,y)e_2+(\alpha f_{1,3}(x,y)+\beta f_{1,2}(x,y))e_3, \quad \forall x=\sum_{i=1}^3x_ie_i,y=\sum_{i=1}^3y_ie_i \in L_{3,4}.$$
            \end{enumerate}
        where $f_{i,j}:L_{3,2}\times L_{3,2} \to \mathbb{F}, (x,y) \mapsto x_iy_j-x_jy_i$.
        \end{theorem}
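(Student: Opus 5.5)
The plan is to reduce everything to the known derivation algebras of Theorem~\ref{tt0}, via the remark that $\delta$ is a biderivation exactly when all the partial maps $\delta(x,\cdot)$ and $\delta(\cdot,y)$ are derivations. For a skew-symmetric bilinear map we have $\delta(\cdot,y)=-\delta(y,\cdot)$. So $\delta$ is a skew-symmetric biderivation if and only if $\delta(x,\cdot)$ is a derivation for every $x$. Since $Der(L)$ is a linear subspace and $\delta(x,\cdot)=\sum_i x_i\,\delta(e_i,\cdot)$, it is enough to require that the three maps $D_i:=\delta(e_i,\cdot)$, $i=1,2,3$, lie in $Der(L_{3,k})$. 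This gives the converse direction for free, and it also settles part (1) at once, since $Der(L_{3,1})=End(L_{3,1})$.

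For $L_{3,2}$, write each $D_i$ in the general form from Theorem~\ref{tt0}, with parameters $a_i,b_i,c_i,d_i,p_i,q_i$. Then impose the skew-symmetry constraints $D_i(e_i)=0$ and $D_i(e_j)=-D_j(e_i)$ for $i<j$, comparing columns:
\begin{itemize}
\item $D_1(e_1)=0$ gives $a_1=c_1=p_1=0$, and $D_2(e_2)=0$ gives $b_2=d_2=q_2=0$.
\item $D_1(e_2)=-D_2(e_1)$ identifies $(b_1,d_1,q_1)$ with $-(a_2,c_2,p_2)$.
\item $D_1(e_3)=d_1e_3=-D_3(e_1)$ forces $a_3=c_3=0$ and $p_3=-d_1$.
\item $D_2(e_3)=a_2e_3=-D_3(e_2)$ forces $b_3=d_3=0$ and $q_3=-a_2$.
\end{itemize}
After these substitutions, $D_3(e_3)=(a_3+d_3)e_3=0$ holds automatically. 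Setting $\alpha=b_1$, $\beta=d_1$, $\mu=q_1$ gives
\[
\delta(e_1,e_2)=\alpha e_1+\beta e_2+\mu e_3,\qquad \delta(e_1,e_3)=\beta e_3,\qquad \delta(e_2,e_3)=-\alpha e_3 .
\]
Expanding $\delta(x,y)=\sum_{i<j}f_{i,j}(x,y)\,\delta(e_i,e_j)$ bilinearly then yields exactly the stated formula. Conversely, for any $\alpha,\beta,\mu$ the resulting $D_i$ are of the form in Theorem~\ref{tt0}, so $\delta^-$ is a skew-symmetric biderivation.

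For $L_{3,3}$ and $L_{3,4}$, repeat the same bookkeeping with the restricted matrix shapes of Theorem~\ref{tt0}. For $L_{3,3}$ the shape has $D(e_1)\in\mathbb{F}e_1+\mathbb{F}e_2+\mathbb{F}e_3$ with a zero first row apart from $a$, $D(e_2)\in\mathbb{F}e_2+\mathbb{F}e_3$, and $D(e_3)=(2a+d)e_3$. For $L_{3,4}$ it is the smaller shape with $a=b=0$ and diagonal $(0,d,d)$. The conditions $D_i(e_i)=0$ and $D_i(e_j)=-D_j(e_i)$ kill the $e_1$-component of $\delta(e_1,e_2)$. They also force $\delta(e_1,e_3)$ to equal the $e_3$-multiple coming from the diagonal entry $d_1$, which coincides with the $e_2$-coefficient $\alpha$ of $\delta(e_1,e_2)$. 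Finally, the constraint on $D_3$, namely that its diagonal $(2a_3+d_3)$ or $d_3$ vanishes, together with $D_2(e_3)=-D_3(e_2)$, forces $\delta(e_2,e_3)=0$. The result is $\delta(e_1,e_2)=\alpha e_2+\beta e_3$, $\delta(e_1,e_3)=\alpha e_3$ and $\delta(e_2,e_3)=0$, which matches parts (3) and (4).

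The main obstacle is the careful tracking of the linear constraints in these last two cases. The diagonal relation $2a+d$ (respectively $d$) couples entries of different $D_i$, and one must check that no hidden extra relation kills $\alpha$. I would verify this by writing out the resulting $D_1,D_2,D_3$ explicitly for general $\alpha,\beta$ and confirming that each has the required shape.
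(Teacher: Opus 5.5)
Your proposal is correct and follows the same route as the paper. You reduce to $D_i=\delta(e_i,\cdot)\in Der(L_{3,k})$ via Theorem~\ref{tt0}, impose $D_i(e_i)=0$ and $D_i(e_j)=-D_j(e_i)$ column by column, and expand $\delta(x,y)=\sum_{i<j}f_{i,j}(x,y)\delta(e_i,e_j)$. Your $L_{3,2}$ bookkeeping matches the paper's, with $d_3=0$ where the paper misprints $a_3=0$.

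One justification in your sketch for $L_{3,3}$ and $L_{3,4}$ should be corrected, though the conclusion stands. The term $\delta(e_2,e_3)=D_2(e_3)=(2a_2+d_2)e_3$ (resp.\ $d_2e_3$) vanishes because $d_2=0$ from $D_2(e_2)=0$ and, for $L_{3,3}$, $a_2=0$ from $D_2(e_1)=-D_1(e_2)\in\mathbb{F}e_2+\mathbb{F}e_3$. The $D_3$ constraints you cite only yield $d_3=0$ (and $a_3=0$) and $q_3=-(2a_2+d_2)$ (resp.\ $q_3=-d_2$), which by themselves do not force $D_2(e_3)=0$.
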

        \begin{proof}
           For $L_{3,1}$, all binary and ternary products vanish identically. Consequently, any skew-symmetric bilinear map on $L_{3,1}$ constitutes a skew-symmetric biderivation.\\

 For $L_{3,2}$, the 'if' direction is obvious. We now prove the 'only if' direction.\\
 
Let $\delta^-$ be a  skew-symmetric biderivation on  $L_{3,2}$ and $x,y \in L_{3,2}$, such that $x=\sum_{i=1}^{3}x_{i}e_{i}$ and $y=\sum_{i=1}^{3} y_{i}e_{i}$. Then, by the bilinearity of $\delta^-$, we obtain, 
$$\delta^-(x,y)=\sum_{i=1}^{3} \sum_{j=1}^{3} x_{i} y_{j} \delta^-(e_i,e_j)=\sum_{i=1}^{3} \sum_{j=1}^{3} x_{i} y_{j} \delta^-_{e_i}(e_j).$$ 
Since, the linear map $\delta^-_{e_i}:L_{2,3} \to L_{3,2},\, x \mapsto \delta^-_{e_i}(x):=\delta^-(e_i,x)$ is a derivation of $L_{3,2}$, then by Theorem ~\ref{tt0}, the matrix $D_{e_i}$ of $\delta^-_{e_i}$, for $i=1,2,3$ is of the form
\begin{equation}
\label{eq0}
D_{e_i}= \begin{pmatrix}

a_i &b_i&0\\
 c_i& d_i& 0 \\
 p_i & q_i & a_i+d_i
\end{pmatrix}.
\end{equation}
Since $ \delta^-$ is skew-symmetric, then, $\delta(e_i,e_i)=0$ for $i=1,2,3$. Therefore
\begin{equation}
D_{e_1}= \begin{pmatrix}

0 &b_1&0\\
 0& d_1& 0 \\
 0 & q_1 & d_1
\end{pmatrix},\quad D_{e_2}= \begin{pmatrix}

a_2&0&0\\
 c_2& 0& 0 \\
 p_2 & 0 & a_2
\end{pmatrix},\quad D_{e_3}= \begin{pmatrix}

a_3 &b_3&0\\
 c_3& -a_3& 0 \\
 p_3 & q_3 & 0
\end{pmatrix}.
\end{equation}
Also, we have $\delta(e_1,e_i)=-\delta(e_i,e_1)$ for $i=2,3$.  Then, we deduce the following equations:
\begin{gather*}
		  	 \begin{cases}
		  	 \, a_2=-b_1; \hspace{0.3 cm} c_2=-d_1; \hspace{0.3 cm}p_2=-q_1;&\\
		  	 \, a_3=0; \hspace{0.3 cm} c_3=0; \hspace{0.3 cm}p_3=-d_1.	 	  	 
		  	 \end{cases}
\end{gather*}
Furthermore, we have $\delta(e_2,e_3)=-\delta(e_3,e_2)$, we get
\begin{gather*}
\begin{cases}
\, b_3=0; &\\
\, a_3=0; & \\
\, q_3=-a_2.
\end{cases}
\end{gather*}
Therefore, from the above equations, the matrix $D_{e_i}$ for $i=0,1,2,3$, be comes
\begin{equation}
D_{e_1}= \begin{pmatrix}

0 &\alpha&0\\
 0& \beta& 0 \\
 0 & \mu & \beta 
\end{pmatrix},\quad D_{e_2}= \begin{pmatrix}

-\alpha&0&0\\
 -\beta& 0& 0 \\
 -\mu & 0 & -\alpha
\end{pmatrix},\quad D_{e_3}= \begin{pmatrix}

0 &0&0\\
 0& 0& 0 \\
 -\beta & \alpha & 0
\end{pmatrix}.
\end{equation}
where $\alpha=b_1=-a_2; \hspace{0.2 cm}\beta=d_1=-c_2=-p_3 \hspace{0.3 cm} \mu=q_1=-p_2$.
Thus, 
$$\delta^-(x,y)=\sum_{i=1}^3x_iD_{e_i}Y$$
where, 
\begin{center}
$Y=\begin{pmatrix}
		y_1 \\
		y_2\\
		y_3
		\end{pmatrix}$
\end{center}
Then, 
\begin{eqnarray*}
\delta^-(x,y)&=&x_1\begin{pmatrix}
		\alpha y_2 \\
		\beta y_2\\
		\mu y_2+\beta y_3
		\end{pmatrix} -x_2\begin{pmatrix}
		\alpha y_1 \\
		\beta y_1\\
		\mu y_1 +\alpha y_3
		\end{pmatrix}+x_3\begin{pmatrix}
		0 \\
		0\\
		-\beta y_1+\alpha y_2
		\end{pmatrix}\\
        &=&\alpha f_{1,2}(x,y)e_1+\beta f_{1,2}(x,y)e_2+(\mu f_{1,2}(x,y)+\beta f_{1,3}(x,y)-\alpha f_{2,3}(x,y))e_3.
        \end{eqnarray*}
        The cases $L_{3,3}$ and $L_{3,4}$  are treated in the same way. Then, we have the following theorem.
\end{proof}
\subsection{Symmetric biderivations}
In this subsection, we give a description of symmetric biderivations of the algebras $L_{3,i}$, (for $i=1,2,3$). 

While symmetric biderivations are known to vanish for certain classes of algebras (see ~\cite{Baj, ob0, ob2}, they are non-trivial within the context of the Lie-Yamaguti algebras examined in the following subsection, as well as in other recent works ~\cite{ob1}.
\begin{proposition}
   Any symmetric bilinear maps $\delta^+: L_{3,1}\times L_{3,1} \to L_{3,1}$ is a symmetric biderivations of $L_{3,1}$. 
\end{proposition}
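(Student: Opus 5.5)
The argument is a direct verification from the definitions, mirroring the treatment of $L_{3,1}$ in the skew-symmetric case and in the theorem on local $\frac{1}{2}$-derivations.

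First, recall that $L_{3,1}=L_{2,1}\oplus L_{1,1}$ is the trivial algebra. Hence $[x,y]=0$ and $\{x,y,z\}=0$ for all $x,y,z\in L_{3,1}$.

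Now let $\delta^+$ be any symmetric bilinear map on $L_{3,1}$ and fix $u\in L_{3,1}$. Check each of the four identities in the definition of a biderivation.

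\begin{enumerate}
\item Left-hand sides. Each left-hand side is $\delta^+$ evaluated with one argument equal to a binary or ternary product. These products vanish, so each left-hand side equals $\delta^+(0,u)$ or $\delta^+(u,0)$, which is $0$ by bilinearity.
\item Right-hand sides. Each right-hand side is a sum of brackets $[\cdot,\cdot]$ or triple products $\{\cdot,\cdot,\cdot\}$ in $L_{3,1}$, so every term is $0$.
\end{enumerate}

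Thus all four identities reduce to $0=0$, and $\delta^+$ is a biderivation. Since it is symmetric by hypothesis, $\delta^+\in BDer_+(L_{3,1})$.

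An equivalent route uses the Remark together with Theorem~\ref{tt0}. For each $x$, the partial maps $\delta^+(x,\cdot)$ and $\delta^+(\cdot,x)$ are linear. Theorem~\ref{tt0} gives $Der(L_{3,1})=End(L_{3,1})$, so these partial maps are automatically derivations.

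There is no real obstacle here. The only point needing care is that the vanishing of the left-hand sides uses bilinearity of $\delta^+$, which is part of the hypothesis.
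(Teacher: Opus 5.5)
Your proof is correct and takes the same approach as the paper: in $L_{3,1}$ all binary and ternary products vanish, so both sides of every biderivation identity are zero and any symmetric bilinear map is a biderivation. Your explicit check that the left-hand sides vanish by bilinearity, and your alternative via the Remark and $Der(L_{3,1})=End(L_{3,1})$, spell out in detail what the paper states in one line.
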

\begin{proof}
   Since for $L_{3,1}$, all binary and ternary products vanish identically, then any symmetric bilinear map on $L_{3,1}$ constitutes a symmetric biderivation.  
\end{proof}
\begin{proposition}\label{pp1}
    A bilinear map $\delta^+ : L_{3,2} \times L_{3,2} \rightarrow L_{3,2}$ is a symmetric biderivation if and only if there exists  $\alpha_i , \beta_i, \gamma_i \in \mathbb{F}$ ($i=1,2,3.)$ such that the derivations $\delta^+_{e_i}$ has the following form
     $$D_{e_1}=\begin{pmatrix}
   \alpha_1 &\beta_1&0\\
    \alpha_2&\beta_2&0\\
    \alpha_3&\beta_3&\alpha_1+\beta_2
    \end{pmatrix}\quad D_{e_2}=\begin{pmatrix}
    \beta_1&\gamma_1&0\\
    \beta_2&\gamma_2&0\\
    \beta_3&\gamma_3&\beta_1+\gamma_2
    \end{pmatrix}\quad D_{e_3}=\begin{pmatrix}
    0&0&0\\
    0&0&0\\
    \alpha_1+\beta_2&\beta_1+\gamma_2 &0
    \end{pmatrix}.$$
\end{proposition}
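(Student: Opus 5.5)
We follow the same scheme as in the skew-symmetric case for $L_{3,2}$. By the Remark in the Preliminaries, for each $i=1,2,3$ the map $\delta^+_{e_i}\colon y\mapsto \delta^+(e_i,y)$ is a derivation of $L_{3,2}$. By Theorem~\ref{tt0} its matrix therefore has the form
\[
D_{e_i}=\begin{pmatrix} a_i & b_i & 0\\ c_i & d_i & 0\\ p_i & q_i & a_i+d_i\end{pmatrix}.
\]
Since $\delta^+$ is bilinear, it is completely determined by these three matrices, through $\delta^+(x,y)=\sum_i x_i D_{e_i}Y$. The proof therefore reduces to finding which triples $(D_{e_1},D_{e_2},D_{e_3})$ are compatible with symmetry.

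\textbf{Necessity.} We impose $\delta^+(e_i,e_j)=\delta^+(e_j,e_i)$, that is, the $j$-th column of $D_{e_i}$ equals the $i$-th column of $D_{e_j}$.

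First take $(i,j)=(1,2)$. Set $\alpha_1=a_1$, $\alpha_2=c_1$, $\alpha_3=p_1$, $\beta_1=b_1$, $\beta_2=d_1$, $\beta_3=q_1$. Symmetry then forces the first column of $D_{e_2}$ to be $(\beta_1,\beta_2,\beta_3)^T$. Setting $\gamma_1=b_2$, $\gamma_2=d_2$, $\gamma_3=q_2$, the derivation shape gives $(3,3)$-entry $\beta_1+\gamma_2$ for $D_{e_2}$ and $\alpha_1+\beta_2$ for $D_{e_1}$.

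Next take $(i,j)=(1,3)$. The first column of $D_{e_3}$ must equal the third column of $D_{e_1}$, which is $(0,0,\alpha_1+\beta_2)^T$. Hence $a_3=c_3=0$ and $p_3=\alpha_1+\beta_2$.

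Then take $(i,j)=(2,3)$. The second column of $D_{e_3}$ must equal $(0,0,\beta_1+\gamma_2)^T$. Hence $b_3=d_3=0$ and $q_3=\beta_1+\gamma_2$.

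Finally, the $(3,3)$-entry of $D_{e_3}$ is $a_3+d_3=0$. All remaining symmetry conditions, namely $(i,i)$ and the transposed pairs, are then automatic. This yields exactly the three matrices in the statement.

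\textbf{Sufficiency.} Conversely, suppose the matrices have the stated form and define $\delta^+(x,y)=\sum_i x_iD_{e_i}Y$. Each $D_{e_i}$ lies in $Der(L_{3,2})$ by Theorem~\ref{tt0}: for $D_{e_3}$ we have $a=b=c=d=0$, so the $(3,3)$-entry $0$ is consistent. Because every $D_{e_i}$ is a derivation, each $\delta^+_x=\sum_i x_i D_{e_i}$ is a derivation. By construction the column-matching conditions hold, so $\delta^+(e_i,e_j)=\delta^+(e_j,e_i)$ for all $i,j$, and bilinearity gives $\delta^+(x,y)=\delta^+(y,x)$. Consequently $\delta^+(\cdot,y)=\delta^+(y,\cdot)$ is also a derivation, and the Remark shows that $\delta^+$ is a symmetric biderivation.

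\textbf{Main difficulty.} The only delicate point is bookkeeping. One must check that the symmetry equations leave no further constraint linking the $\alpha_i$, $\beta_i$, $\gamma_i$ beyond those listed above. In particular, for the pair $(1,2)$, the third entries $p_2$ and $q_1$ must match, which is exactly the identification $\beta_3=q_1=p_2$.
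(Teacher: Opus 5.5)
Your proposal is correct and follows essentially the same route as the paper: you take the derivation shape from Theorem~\ref{tt0} for each $D_{e_i}$, impose $\delta^+(e_i,e_j)=\delta^+(e_j,e_i)$ for the pairs $(1,2),(1,3),(2,3)$ by matching columns, and rename the surviving parameters. Your bookkeeping is actually cleaner than the paper's: you correctly obtain $p_3=a_1+d_1=\alpha_1+\beta_2$, where the paper's proof writes $a_1+b_1$, and you spell out the sufficiency direction that the paper dismisses as obvious.
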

\begin{proof}
    The 'if' direction is obvious. We now prove the 'only if' direction.
 
Let $\delta^+$ be a  skew-symmetric biderivation on  $L_{3,2}$ and $x,y \in L_{3,2}$, such that $x=\sum_{i=1}^{3}x_{i}e_{i}$ and $y=\sum_{i=1}^{3} y_{i}e_{i}$. Then, by the bilinearity of $\delta^+$, we obtain, 
$$\delta^+(x,y)=\sum_{i=1}^{3} \sum_{j=1}^{3} x_{i} y_{j} \delta^+(e_i,e_j)=\sum_{i=1}^{3} \sum_{j=1}^{3} x_{i} y_{j} \delta^+_{e_i}(e_j).$$ 
Since, the linear map $\delta^+_{e_i}:L_{3,2} \to L_{3,2},\, x \mapsto \delta^+_{e_i}(x):=\delta^+(e_i,x)$ is a derivation of $L_{3,2}$, then by Theorem ~\ref{tt0}, the matrix $D_{e_i}$ of $\delta^+_{e_i}$, for $i=1,2,3$ is of the form
\begin{equation}
\label{eq01}
D_{e_i}= \begin{pmatrix}

a_i &b_i&0\\
 c_i& d_i& 0 \\
 p_i & q_i & a_i+d_i
\end{pmatrix}.
\end{equation}
From the equalities $\delta^+(e_1,e_2)=\delta^+(e_2,e_1)$, we have 
\begin{equation}\label{ss1}
    a_2=b_1, \quad c_2=d_1 \quad \mbox{and} \quad p_2=q_1.
\end{equation}
The equalities $\delta^+(e_1,e_3)=\delta^+(e_3,e_1)$, implies that  
\begin{equation}\label{ss2}
    a_3=0, \quad c_3=0 \quad \mbox{and} \quad p_3=a_1+b_1.
\end{equation}
Since $\delta^+(e_2,e_3)=\delta^+(e_3,e_2)$, then 
\begin{equation}\label{ss3}
    b_3=0, \quad d_3=0 \quad \mbox{and} \quad q_3=a_2+d_2.
\end{equation}
Comparing Equations (~\ref{ss1})–(~\ref{ss3}) and setting $\alpha_1=a_1, \, \alpha_2=c_1,\, \alpha_3=p_1,\, \beta_1=b_1,\, \beta_2=d_1, \beta_3=q_1, \, \gamma_1=b_2,\, \gamma_2=d_2, \, \gamma_3=q_2$, we deduce that
 $$D_{e_1}=\begin{pmatrix}
   \alpha_1 &\beta_1&0\\
    \alpha_2&\beta_2&0\\
    \alpha_3&\beta_3&\alpha_1+\beta_2
    \end{pmatrix}\quad D_{e_2}=\begin{pmatrix}
    \beta_1&\gamma_1&0\\
    \beta_2&\gamma_2&0\\
    \beta_3&\gamma_3&\beta_1+\gamma_2
    \end{pmatrix}\quad D_{e_3}=\begin{pmatrix}
    0&0&0\\
    0&0&0\\
    \alpha_1+\beta_2&\beta_1+\gamma_2 &0
    \end{pmatrix}.$$
\end{proof}
\begin{proposition}
    A bilinear map $\delta^+ : L_{3,3} \times L_{3,3} \rightarrow L_{3,3}$ is a symmetric biderivation if and only if there exists  $\alpha_i , \beta_i, \gamma_i \in \mathbb{F}$ ($i=1,2,3.)$ such that the derivations $\delta^+_{e_i}$ has the following form
     $$D_{e_1}=\begin{pmatrix}
   \alpha_1 &0&0\\
    \alpha_2&\beta_1&0\\
    \alpha_3&\beta_2&2\alpha_1+\beta_1
    \end{pmatrix}\quad D_{e_2}=\begin{pmatrix}
    0&&0\\
    \beta_1&\gamma_1&0\\
    \beta_2&\gamma_2&\gamma_1
    \end{pmatrix}\quad D_{e_3}=\begin{pmatrix}
    0&0&0\\
    0&0&0\\
    2\alpha_1+\beta_1&\gamma_1 &0
    \end{pmatrix}.$$
\end{proposition}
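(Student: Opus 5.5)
The plan is to run the same argument as in Proposition~\ref{pp1}, now using the derivation algebra of $L_{3,3}$ from Theorem~\ref{tt0}. The two ingredients are the Remark characterizing biderivations through the partial maps, and the symmetry relations $\delta^+(e_i,e_j)=\delta^+(e_j,e_i)$.

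\textbf{Necessity.} Let $\delta^+$ be a symmetric biderivation of $L_{3,3}$. For each $i$, the map $\delta^+_{e_i}=\delta^+(e_i,\cdot)$ is a derivation. By Theorem~\ref{tt0}, its matrix therefore has the form
$$D_{e_i}=\begin{pmatrix} a_i&0&0\\ b_i&d_i&0\\ p_i&q_i&2a_i+d_i\end{pmatrix}, \qquad i=1,2,3.$$
Symmetry says that the $j$-th column of $D_{e_i}$ equals the $i$-th column of $D_{e_j}$. I will write this out for the three pairs $(1,2)$, $(1,3)$, $(2,3)$.

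\begin{itemize}
\item From $D_{e_1}(e_2)=D_{e_2}(e_1)$ we get $a_2=0$, $b_2=d_1$ and $p_2=q_1$.
\item From $D_{e_1}(e_3)=D_{e_3}(e_1)$ we get $a_3=0$, $b_3=0$ and $p_3=2a_1+d_1$.
\item From $D_{e_2}(e_3)=D_{e_3}(e_2)$, and using $a_2=0$, we get $d_3=0$ and $q_3=d_2$.
\end{itemize}

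The diagonal entries of $D_{e_3}$ are then $a_3=0$, $d_3=0$ and $2a_3+d_3=0$, so $D_{e_3}(e_3)=0$ automatically.

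Now rename the free parameters:
$$\alpha_1=a_1,\quad \alpha_2=b_1,\quad \alpha_3=p_1,\quad \beta_1=d_1,\quad \beta_2=q_1,\quad \gamma_1=d_2,\quad \gamma_2=q_2.$$
With this renaming the three matrices take exactly the stated form. In particular $D_{e_2}$ has first row zero, second row $(\beta_1,\gamma_1,0)$ and third row $(\beta_2,\gamma_2,\gamma_1)$. The matrix $D_{e_3}$ has only its last row nonzero, equal to $(2\alpha_1+\beta_1,\gamma_1,0)$. This also fills in the blank entry in the displayed $D_{e_2}$, which should be $0$.

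\textbf{Sufficiency.} Given matrices of the stated form, define
$$\delta^+(x,y)=\sum_{i} x_i D_{e_i}Y,$$
where $Y$ is the coordinate column of $y$. The columns of the three matrices were matched precisely so that $\delta^+(e_i,e_j)=\delta^+(e_j,e_i)$ for all $i,j$, so by bilinearity $\delta^+$ is symmetric. Each $D_{e_i}$ belongs to $Der(L_{3,3})$, and $Der(L_{3,3})$ is a linear subspace, so $\delta^+(x,\cdot)=\sum_i x_iD_{e_i}$ is a derivation for every $x$. By symmetry, $\delta^+(\cdot,y)=\delta^+(y,\cdot)$ is also a derivation. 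The Remark then shows that $\delta^+$ is a biderivation.

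There is no real obstacle here; the argument is a finite linear computation. The only point needing care is the bookkeeping in the symmetry relations: one must remember that $a_2=0$ when evaluating the $(3,3)$ entry $2a_2+d_2$ of $D_{e_2}$. Without it, that entry would not collapse to $\gamma_1$ and $q_3$ would not equal $\gamma_1$ as required.
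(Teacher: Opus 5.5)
Your proposal is correct and is exactly the argument the paper intends; the paper only says the proof is analogous to Proposition~\ref{pp1}. In both, you take the general derivation matrix of $L_{3,3}$ from Theorem~\ref{tt0} for each $D_{e_i}$, impose $\delta^+(e_i,e_j)=\delta^+(e_j,e_i)$ column by column, and rename the surviving parameters. Your relations $a_2=0$, $b_2=d_1$, $p_2=q_1$, $a_3=b_3=d_3=0$, $p_3=2a_1+d_1$, $q_3=d_2$ are right, and your converse argument (symmetry plus the fact that $Der(L_{3,3})$ is a linear subspace) fills in the direction the paper calls obvious.
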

\begin{proof}
    The proof proceeds similarly to the proof of Proposition ~\ref{pp1}.

\end{proof}
\begin{proposition}
    A bilinear map $\delta^+ : L_{3,4} \times L_{3,4} \rightarrow L_{3,4}$ is a symmetric biderivation if and only if there exists  $\alpha_i , \beta_i, \gamma_i \in \mathbb{F}$ ($i=1,2,3.)$ such that the derivations $\delta^+_{e_i}$ has the following form
     $$D_{e_1}=\begin{pmatrix}
   0 &0&0\\
    \alpha_1&\beta_1&0\\
    \alpha_2&\beta_2&\beta_1
    \end{pmatrix}\quad D_{e_2}=\begin{pmatrix}
    0&0&0\\
    \beta_1&\gamma_2&0\\
    \beta_2&\gamma_3&\gamma_2
    \end{pmatrix}\quad D_{e_3}=\begin{pmatrix}
    0&0&0\\
    0&0&0\\
    \beta_1&\gamma_2 &0
    \end{pmatrix}.$$
\end{proposition}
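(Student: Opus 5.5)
The proof follows the scheme of Proposition~\ref{pp1}. A bilinear map is determined by its values on pairs of basis vectors. The biderivation condition says that each $\delta^+_{e_i}=\delta^+(e_i,\cdot)$ is a derivation. Symmetry then imposes linear relations among the entries of the three matrices $D_{e_i}$.

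\emph{Only if.} Let $\delta^+$ be a symmetric biderivation of $L_{3,4}$. For each $i=1,2,3$, the map $\delta^+_{e_i}\colon x\mapsto\delta^+(e_i,x)$ is a derivation of $L_{3,4}$. By Theorem~\ref{tt0} its matrix is
\[
D_{e_i}=\begin{pmatrix} 0&0&0\\ c_i&d_i&0\\ p_i&q_i&d_i\end{pmatrix}.
\]
Reading columns, we get
\[
\delta^+(e_i,e_1)=c_ie_2+p_ie_3,\qquad \delta^+(e_i,e_2)=d_ie_2+q_ie_3,\qquad \delta^+(e_i,e_3)=d_ie_3 .
\]
Now impose $\delta^+(e_i,e_j)=\delta^+(e_j,e_i)$ for each pair $i<j$:
\begin{itemize}
\item The pair $(1,2)$ gives $c_2=d_1$ and $p_2=q_1$.
\item The pair $(1,3)$ compares $d_1e_3$ with $c_3e_2+p_3e_3$, so $c_3=0$ and $p_3=d_1$.
\item The pair $(2,3)$ compares $d_2e_3$ with $d_3e_2+q_3e_3$, so $d_3=0$ and $q_3=d_2$.
\end{itemize}
Rename $\alpha_1=c_1$, $\alpha_2=p_1$, $\beta_1=d_1$, $\beta_2=q_1$, $\gamma_2=d_2$, $\gamma_3=q_2$. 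This produces exactly the three displayed matrices. In particular, $D_{e_3}$ has the single nonzero row $(\beta_1,\gamma_2,0)$, since its $(3,3)$ entry equals $d_3=0$.

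\emph{If.} Suppose $\delta^+$ is the bilinear map defined by $\delta^+(e_i,e_j)=D_{e_i}e_j$ with the displayed matrices.
\begin{itemize}
\item Symmetry on basis pairs is immediate by comparing the columns as above, so $\delta^+$ is symmetric by bilinearity.
\item Each displayed $D_{e_i}$ has the shape of Theorem~\ref{tt0}, so it is a derivation. For $D_{e_3}$ this holds with $c=d=0$, $p=\beta_1$, $q=\gamma_2$.
\item Since $Der(L_{3,4})$ is a vector space, $\delta^+(a,\cdot)=\sum_i a_iD_{e_i}$ is a derivation for every $a=\sum a_ie_i$. This gives the second and fourth identities of the biderivation definition.
\item By symmetry, $\delta^+(\cdot,a)=\delta^+(a,\cdot)$, which gives the first and third identities.
\end{itemize}
By the Remark on biderivations, $\delta^+$ is therefore a symmetric biderivation.

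There is no real obstacle; the computation is routine. The only point needing care is the bookkeeping in the symmetry comparisons. One must read $\delta^+(e_i,e_j)$ as the $j$-th column of $D_{e_i}$ and not as a row. One must also check the pairs involving $e_3$ carefully, because that is where the forced vanishing of $c_3$ and $d_3$ (and hence of the diagonal of $D_{e_3}$) comes from.
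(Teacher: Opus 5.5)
Your proposal is correct and takes the same route as the paper, which simply refers back to the argument of Proposition~\ref{pp1}: write each $\delta^+_{e_i}$ in the derivation form of Theorem~\ref{tt0}, then impose $\delta^+(e_i,e_j)=\delta^+(e_j,e_i)$ on basis pairs. Your column bookkeeping ($c_2=d_1$, $p_2=q_1$, $c_3=0$, $p_3=d_1$, $d_3=0$, $q_3=d_2$) is accurate, and your explicit treatment of the converse is more careful than what the paper writes out.
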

\begin{proof}
    The argument is analogous to the proof of Proposition~\ref{pp1}.
\end{proof}
 \section*{Declarations:}

 \noindent \textbf{Disclosure statement:}\\
 The authors declare no conflicts of interest.\\
 \noindent \textbf{Data availability statement:}\\
 No data was used for the research described in the article.\\
 
\noindent \textbf{Ethical Approval :}
not applicable.\\
\noindent \textbf{Funding :}
not applicable.


\begin{thebibliography}{99}


\bibitem{Baj}
Bajo, I., Benayadi, S.,  Oubba, H. (2025). Non-existence of Symmetric Biderivations on Finite-Dimensional Perfect Lie Algebras: I. Bajo et al. Mediterranean Journal of Mathematics, 22(6), 162.
 
\bibitem{bn}
Benayadi, S.  Oubba, H. Nonassociative algebras of biderivation-type. Linear Algebra and its Applications, 2024, vol. 701, p. 22-60.

\bibitem{br0}
 Bresar, M.  Martindale, W. S.  Miers, C. R. Centralizing maps in prime rings with involution, J. Algebra 161, 342-357 (1993).
 \bibitem{br1}
   Bresar, M.  Zhao, K. Biderivations and commuting linear maps on Lie algebras, J. Lie Theory 28,
885-900 (2018).
\bibitem{br}
Bre\v{s}ar, M., Martindale 3rd, W.S., Miers, C.R.:
 Centralizing maps in prime rings with involution. J. Algebra. 161, 342-357 (1993)
\bibitem{du}
 Du, Y. Wang, Y.  Biderivations of generalized matrix algebras, Linear Algebra Appl. 438(11), 4483–
4499 (2013).







\bibitem{Fil1}
Filppov, V.T.  On $\delta$-derivations of Lie algebras. Sib. Math. J. 39(6), 1218–1230 (1998)

\bibitem{Fil2}
Filppov, V.T.  $\delta$-Derivations of prime Lie algebras. Sib. Math. J. 40(1), 174–184 (1999)

\bibitem{Kad}
Kadison R. V., Local derivations. Journal of Algebra, (1990) Vol. 130, P. 494–509

\bibitem{khud}
 Khudoyberdiyev A. K., Yusupov B. B., Local and 2-local $\frac{1}{2}$-derivations on finite-dimensional Lie algebras.Results in Mathematics, (2024) Vol. 79, 210.


\bibitem{Lar}
 Larson D. R., Sourour A. R., Local derivations and local automorphisms of B(X). Proceedings of Symposiain Pure Mathematics, (1990), P. 187–194




\bibitem{ob0}
Oubba, H. Generalized quaternion algebras. Rendiconti del Circolo Matematico di Palermo Series 2, 2023, vol. 72, no 8, p. 4239-4250.
\bibitem{ob1}
Oubba, H. Biderivations, commuting linear maps, post-Lie algebra structure on solvable Lie algebras of maximal rank. Communications in Algebra, 2024, p. 1-10.
\bibitem{ob2}
Oubba, H. Local (2-Local) derivations and automorphisms and biderivations of complex $\omega$-Lie algebras. Le Matematiche, 2024, vol. 79, no 1, p. 135-150.



\bibitem{Mos}
Mosbahi, Bouzid, and Yusupov Bakhtiyor. "Derivations, local and 2-local derivations and automorphism groups of 3-dimensional nilpotent Lie-Yamaguti algebras." Communications in Algebra (2026): 1-17.










 
\bibitem{Ab}
Abdelwahab, H., Barreiro, E., Calderón, A. J., Ouaridi, A. F. (2022). The classification of nilpotent Lie-Yamaguti
algebras. Linear Algebra Appl. 654:339–378. DOI: 10.1016/j.laa.2022.09.005.







\bibitem{Chen}
Chen, Z.:
 Biderivations and linear commuting maps on simple generalized Witt algebras over a field. Electron. J. Linear Algebra. 31, 1-12 (2016)
 
\bibitem{mam}
Mamadaliyev  U.  K.,  Sattarov  A.,  Yusupov  B.  B.,  Local  and  2-local $\frac{1}{2}$-derivations  of  solvable  Leibnizalgebras. Eurasian Mathematical Journal, (2025) Vol. 16, Iss. 2, P. 42–54.



\bibitem{Gua}
Gua. X., Liu. X., Zhao, K.:
 Biderivations of Block Lie algebras, Linear Algebra Appl. 538, 43-55 (2018)

\bibitem{Han}
Han, X., Wang, D., Xia, C.:
  Linear commuting maps and biderivations on the Lie algebra $W(a,\,b)$. J. Lie Theory 26, 777-786 (2016)

\bibitem{Sem}
Semrl  P.,  Local  automorphisms  and  derivations  on B(H).  Proceedings  of  the  American  MathematicalSociety, (1997) Vol. 125, P. 2677–2680.

  
\bibitem{Yus}
 Yusupov B., Vaisova N., Madrakhimov T., Local $\frac{1}{2}$-derivation on n-dimensional naturally graded quasi-filiform Leibniz algebra of type I. AIP Conference Proceedings, (2024)
\bibitem{Yus1}
 Yusupov  B.,  Local $\frac{1}{2}$-derivation  on  naturally  graded  filiform  Leibniz  algebra.  Journal  of  MathematicalSciences, (2025),
 \bibitem{Yus2}
 Yusupov B., Local and 2-local $\frac{1}{2}$-derivation on naturally graded non-Lie p-filiform Leibniz algebras. Journalof Algebra Combinatorics Discrete Structures and Applications, (2025) Vol. 12, Iss. 3, P. 275–288.

\bibitem{Tang}
Tang, X.:
 Biderivations of finite dimensional complex simple Lie algebras. Linear and Multilinear Algebra. 66, 250-259 (2018)
\bibitem{Wang1}
Wang, D., Yu, X.:
 Biderivations and Linear commuting maps on the Schr\"{o}dinger-Virasoro Lie algebras , Comm. Algebra. 41, 2166-2173 (2013)
\bibitem{Wang2}
Wang, D., Yu, X., Chen, Z.:
 Biderivations of the parabolic subalgebra of simple Lie algebras, Comm. Algebra. 39, 4097-4104 (2011)

\end{thebibliography}
\end{document}